\newcommand{\mathcircumflex}[0]{\mbox{\^{}}}
\providecommand{\algorithmname}{Algorithm}
\numberwithin{equation}{section}
\numberwithin{figure}{section}
\theoremstyle{plain}
\newtheorem{thm}{\protect\theoremname}
\theoremstyle{plain}
\newtheorem{lem}[thm]{\protect\lemmaname}
\theoremstyle{plain}
\newtheorem{cor}[thm]{\protect\corollaryname}
\theoremstyle{plain}
\newtheorem{prop}[thm]{\protect\propositionname}
\theoremstyle{definition}
\newtheorem*{problem*}{\protect\problemname}
\DeclareMathOperator{\gcol}{gcol}
\DeclareMathOperator{\col}{col}
\DeclareMathOperator{\ra}{ra}
\providecommand{\corollaryname}{Corollary}
\providecommand{\lemmaname}{Lemma}
\providecommand{\problemname}{Problem}
\providecommand{\propositionname}{Proposition}
\providecommand{\theoremname}{Theorem}
\begin{document}
\title{Turn Skipping and the Game Coloring Number}
\author{Jason Guglielmo and Samuel Tinlin}
\date{\today}
\begin{abstract}
The game coloring number $\gcol(G)$ of a graph $G$ is a two player
competitive variant of the coloring number. We introduce the preordered
game coloring number to study the consequences of either player skipping
any number of turns. In particular, we show that neither player can
improve their performance by doing so. We use this result to show
that for any induced subgraph $H\subset G,$ $|G-H|=k$ implies the
tight bound $\gcol(H)\leq\gcol(G)-2k$ and if $d_{G}(x)\leq\gcol(G-x)$,
then $\gcol(G-x)\leq\gcol(G)-1$. 
\end{abstract}

\maketitle

\section{Introduction }

Let $G=\left(V,E\right)$ be a graph and let $\tau$ be a linear ordering
of $k$ vertices of $G$ for some $k\leq\left|G\right|$, that is,
$v_{1}\leq_{\tau}v_{2}\leq_{\tau}\cdots\leq_{\tau}v_{k}$ for $v_{1},\dots,v_{k}\in G$.
We identify such a linear ordering as the $k$-permutation $\tau=\left(v_{1},\dots,v_{k}\right)$.
If $k=0$, then we write the empty permutation $\tau=\left(\right)$,
and if $k=|G|$ then we call $\tau$ a \emph{complete permutation}
of $G.$ The \emph{range }of $\tau$ is defined as $\ra\left(\tau\right)=\left\{ v_{1},\dots,v_{k}\right\} $.
For some other $\ell$-permutation $\tau'=\left(w_{1},\dots,w_{\ell}\right)$
with $v_{i}\neq w_{j}$ for each $i,j$, we define the operation of
\emph{concatenation} as $\tau\mathcircumflex\tau'=\left(v_{1},\dots,v_{k},w_{1},\dots,w_{\ell}\right)$.
If $k<|G|$ then the set of all $k$-permutations of $G$ is denoted
by $\Pi_{k}\left(G\right)$; else if $k=|G|$ then it is denoted by
$\Pi\left(G\right)$. 

Now suppose $\tau$ is a complete permutation of $G=\left(V,E\right)$.
For any $v\in V$, we define the \emph{out-neighborhood} of $v$ in
$G$ with respect to $\tau$ as $N_{G}^{+}\left(\tau,v\right)=\left\{ u\in V\mid v>_{\tau}u\right\} $
and the \emph{closed out-neighborhood} as $N_{G}^{+}\left[\tau,v\right]=N_{G}^{+}\left(\tau,v\right)\cup\left\{ v\right\} $.
The \emph{coloring number of }$G$ \emph{with respect to }$\tau$,
denoted $\col\left(G,\tau\right)$, is defined as
\[
\col\left(G,\tau\right)=\max_{v\in V}\left|N_{G}^{+}\left[\tau,v\right]\right|.
\]
We then define the \emph{coloring number of $G$, }denoted $\col\left(G\right)$,
as
\[
\col\left(G\right)=\min_{\tau\in\Pi\left(G\right)}\left\{ \col\left(G,\tau\right)\right\} .
\]

Our focus in this paper is on a competitive variant of the coloring
number, called the game coloring number. The \emph{ordering game}
on $G$ is defined as follows: two players, Alice and Bob, take turns
choosing vertices from $G$ that have not yet been chosen to form
a complete permutation $\tau\in\Pi\left(G\right)$. Given some $k$-permutation
$\sigma$, we call $\sigma$ a \emph{preordering} of $G$. The \emph{$\sigma$-preordered
game }(or \emph{$\sigma$-game}) of $G$ is played in the same way
as the ordering game, except Alice and Bob take turns choosing vertices
from $V'=V-\ra\left(\sigma\right)$ and adjoining these vertices on
the preordering $\sigma$ to form some $\tau\in\Pi\left(G\right)$.
If no preordering is specified, we call it the \emph{preordered game.}

Suppose the $\sigma$-game has been played to form some $\tau\in\Pi\left(G\right)$.
The \emph{score} $s$ of the $\sigma$-game is defined as 
\[
s=\col\left(G,\tau\right).
\]

\noindent Alice is trying to force the smallest score possible, while
Bob is trying to force the largest score possible. We define the \emph{$\sigma$-game
coloring number}, denoted $\sigma\text{-}\gcol\left(G\right)$, to
be the least $s$ such that Alice has a strategy to obtain a score
of at most $s$ in the $\sigma$-game of $G$, regardless of how Bob
plays. 

Informally, a strategy for either player is a function that determines
how they should play at any given turn in the game. Let $\rho=\left(v_{1},\dots,v_{k}\right)$
be a $k$-permutation of some graph $G$. A \emph{strategy} $S$ is
a function from $\bigcup_{k=0}^{n}\Pi_{k}\left(G\right)$ to $V(G)$
defined by $S\left(\rho\right)=x$, where $x\in V\setminus\ra\left(\rho\right)$. 

If Alice begins the ordering game, then we call it the \emph{Alice
ordering} \emph{game }(or simply the ordering game)\emph{. }If Bob
begins the ordering game, then we call it the \emph{Bob ordering game}.\emph{
}Suppose $\sigma=(v_{1},...v_{m})$ is a preordering of $G$. Then
$\sigma$ gives the position of the game after $m$ vertices have
been ordered. If $m$ is even (odd), then Alice (Bob) begins the game.
If $m$ is odd (even) and Alice (Bob) begins the game instead, we
call it the \emph{Alice (Bob) $\sigma$-game}. The \emph{Alice (Bob)
$\sigma$}-\emph{game coloring number }$\sigma\text{-}\gcol_{A}\left(G\right)$
$\left(\sigma\text{-}\gcol_{B}\left(G\right)\right)$ is the $\sigma$-game
coloring number for the Alice (Bob) $\sigma$-game. Note that the
ordering game is just the case where $\sigma=\left(\right)$, which
recovers the \emph{game coloring number}, denoted by $\gcol\left(G\right)$.

Is it possible that skipping a turn may actually allow Alice to reduce
the score? The first goal of this paper is to determine how the game
coloring number is affected if either Alice or Bob are allowed to
skip one or more turns. We use the preordered game to accomplish this.

\section{Monotonicity and Turn Skipping}

A critical result of Wu and Zhu \cite{Monotonicity} is that the game
coloring number is a monotonic parameter. In this context, a monotonic
parameter on a graph $G$ is one whose value cannot increase on a
subgraph $H\subset G$. We need to extend this result to the preordered
game. For completeness, we have included our adaptation of Wu and
Zhu's argument as the proof of Lemma \ref{Monotonicity Lemma}.
\begin{lem}
Let $G=\left(V,E\right)$ be a graph, and let $\sigma=\left(v_{1},\cdots,v_{m}\right)$
be a preordering of $G$. If $H$ is a subgraph of $G$ with $\ra\left(\sigma\right)\subseteq V\left(H\right)$,
then $\sigma\text{-}\gcol\left(H\right)\leq\sigma\text{-}\gcol\left(G\right)$.\label{Monotonicity Lemma}
\end{lem}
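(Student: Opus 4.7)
The plan is to extend Wu and Zhu's strategy-stealing argument to the preordered setting using a shadow game. Set $k = \sigma\text{-}\gcol(G)$ and fix a strategy $S_G$ for Alice in the $\sigma$-game on $G$ that guarantees a score of at most $k$; I will construct a strategy for Alice in the $\sigma$-game on $H$ with the same guarantee.

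Alice plays the real game on $H$ while maintaining in parallel a shadow $\sigma$-game on $G$, in which she follows $S_G$ for her own moves. The shadow legally begins with the preordering $\sigma$ because $\ra(\sigma) \subseteq V(H) \subseteq V(G)$. The construction will satisfy the invariant that the shadow ordering $\tau_G$, restricted to $V(H)$, coincides with the real ordering $\tau_H$ in both the sequence of vertices and the identity of the player responsible for each; all ``extra'' moves of $\tau_G$ lie in $V(G) \setminus V(H)$. On each of Alice's real turns she consults $S_G$ on the current shadow state: if the output $x$ lies in $V(H)$ she plays $x$ in both games, and if $x \in V(G) \setminus V(H)$ she plays $x$ only in the shadow, inserts a simulated shadow Bob move drawn from the unplayed vertices of $V(G) \setminus V(H)$, and consults $S_G$ again, iterating until the output lies in $V(H)$. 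On each of Bob's real turns she records his move as the next shadow Bob move. The resulting $\tau_G \in \Pi(G)$ is a valid $\sigma$-game play in which Alice followed $S_G$, so $\col(G, \tau_G) \leq k$; the invariant together with $E(H) \subseteq E(G)$ then yields $N_H^+[\tau_H, v] \subseteq N_G^+[\tau_G, v]$ for every $v \in V(H)$, and hence $\col(H, \tau_H) \leq \col(G, \tau_G) \leq k$.

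The main obstacle is verifying the well-definedness of the shadow construction. One must check that the shadow game's alternation of Alice and Bob turns stays aligned with that of the real game, so that Alice's simulated shadow Bob insertions interleave correctly with her $S_G$-driven shadow Alice moves, and that Alice is never forced to supply a shadow Bob vertex from $V(G) \setminus V(H)$ after that set has been exhausted mid-subroutine. The first point is handled by careful parity bookkeeping since simulated moves are always inserted in Alice--Bob pairs between consecutive real moves; the second can be addressed by an induction on $|V(G) \setminus V(H)|$ that reduces to the single-vertex case $|V(G) \setminus V(H)| = 1$, which requires a small ad hoc adjustment to the shadow completion.
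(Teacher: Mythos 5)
Your overall architecture is the same as the paper's (a real game on $H$ shadowed by an imaginary game on $G$ in which Alice follows her optimal strategy), but the step you defer as ``a small ad hoc adjustment'' is in fact the entire difficulty of the proof, and your proposal does not contain the idea needed to resolve it. After your reduction to the case $V(G)\setminus V(H)=\{x\}$, your pairing scheme breaks down the very first time $S_G$ outputs $x$: Alice plays $x$ in the shadow and must then supply a simulated Bob move, but $V(G)\setminus V(H)$ is already exhausted, so that move must be some $w\in V(H)$ not yet played in the real game. At that moment your invariant --- that $\tau_G$ restricted to $V(H)$ coincides with $\tau_H$ --- is irreparably broken: $w$ now occurs strictly earlier in the shadow ordering than it will in the real ordering, so $N_H^+[\tau_H,w]\subseteq N_G^+[\tau_G,w]$ fails (indeed $w$ may acquire many more back-neighbors in $\tau_H$ than in $\tau_G$), and the score of $w$ in the real game is simply not controlled by $\col(G,\tau_G)$. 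Your concluding containment argument therefore does not apply to $w$, and nothing in the proposal bounds $s[w]$.

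The paper's proof is essentially the machinery you are missing. It chooses the substitute vertex $w$ to have \emph{minimum degree} among unordered vertices of $G$, and bounds its real-game score not by comparison of neighborhoods but by the chain $s[w]\le d_H(w)\le d_G(w)\le d_G(u^*)=s'[u^*]\le s$, where $u^*$ is the last vertex ordered in the imaginary game (whose closed back-neighborhood there is its entire closed neighborhood). Breaking the invariant also creates a second problem you have not addressed: once $w$ sits in the shadow but not in the real game, real Bob may later play $w$, which Alice can no longer record in the shadow (the paper's ``illegal repeat''), forcing another minimal-degree substitution and a weaker invariant of the form $\ra(\rho)-x=\ra(\tau)+w$ that must be re-verified through every case. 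Until you supply the minimal-degree device (or some replacement for it) and the bookkeeping for repeats, the proof is incomplete at its central point.
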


\begin{proof}
We argue by induction on $\left|G\right|-\left|H\right|$. If $\left|G\right|-\left|H\right|=0$,
then $H:=G$, so we are done. Else, suppose $\left|G\right|-\left|H\right|>0$.
Then there exists $x\in V\setminus V(H)$. Let $H'=G-x$. By an obvious
induction argument, it suffices to show that $\sigma\text{-}\gcol\left(H'\right)\leq\sigma\text{-}\gcol\left(G\right)$.

For simplicity, rename $H'$ as $H$. Let $\sigma\text{-}\gcol\left(G\right)=s$.
By definition of the $\sigma$-game coloring number, Alice has a strategy
$S'_{A}$ for playing the $\sigma$-game on $G$ that results in a
score of at most $s$ regardless of how Bob plays. We will construct
a strategy $S_{A}$ for Alice to play on the $\sigma$-game on $H$
that results in a score of at most $s$. To do so, we consider two
$\sigma$-games: the real game on $H$ between Alice and Bob, and
an imaginary game on $G$ that Alice is playing against herself.

Before the first play of the game, let $\tau=\rho'=\sigma$. Else,
let $\tau$ represent the position before Alice's next move in the
real game and $\rho'$ represent the position after Alice's last move
in the imaginary game. Alice's strategy $S_{A}$ is determined in
Algorithm \ref{Monotonicity Algorithm}. In this algorithm, Alice's
strategy involves constructing $\rho$, where $\rho$ represents the
position in the imaginary game after Alice's interpretation of Bob's
last move $v$. If this is the first move of the game, we assume $v$
is undefined. We shall maintain that after Alice's construction of
$\rho$ (lines $5\text{-}11$) but before her redefinition of $\rho'$
(lines $12\text{-}26$), the following \emph{invariant} holds: 
\begin{equation}
\ra\left(\rho\right)-x=\begin{cases}
\ra\left(\tau\right), & \text{if }x\notin\ra\left(\rho\right)\\
\ra\left(\tau\right)+w\text{ for some }w\notin\ra\left(\tau\right), & \text{else.}
\end{cases}\label{eq: Invariant}
\end{equation}
Note that this invariant holds trivially after the first construction
of $\rho$ because $\rho=\sigma=\tau$ by Algorithm \ref{Monotonicity Algorithm}and
$x\notin\sigma$. There are three immediate consequences of this invariant:
\begin{align}
\ra\left(\tau\right) & \subseteq\ra\left(\rho\right)\label{eq: Containment}\\
\ra\left(\rho\right) & =\ra\left(\tau\right),\text{ if }x\notin\ra\left(\rho\right)\label{eq: Equality}\\
\left|\ra\left(\rho\right)-x\right|-\left|\ra\left(\tau\right)\right| & =1,\text{ else}.\label{eq: Size}
\end{align}

\begin{algorithm}
\begin{algorithmic}[1]

\REQUIRE $G=(V,E)$ and $H$ as described above, and  incomplete linear orderings $\tau$ and $\rho'$ of $H$ and $G$ respectively, with the invariants holding for both. If this is not the first play, then $v$ is the last move played by Bob.
\ENSURE Linear orderings $\tau'$ and $\rho$ of $H$ and $G$, respectively. Redefinition of the linear ordering $\rho'$.

\STATE {$\{$We check our termination conditions$\}$}
\IF {$\ra (\tau) = V(H)$}
\STATE end
\ENDIF

\STATE {$\{$We determine $\rho \}$}
\IF {$\tau = \sigma$}
\STATE $\rho = \sigma$
\ELSIF {$v \notin \ra (\rho')$}
\STATE $\rho := \rho' \string^ (v)$
\ELSE [$v$ is an illegal repeat]
\STATE $V' = V - \ra ( \rho' )$
\STATE $\rho := \rho' \string^ (y)$, where $y$ satisfies $\min_{y \in V'} d_{G} (y)$
\ENDIF

\STATE {$\{$We determine $\tau'$ and $\rho' \}$}
\IF {$V - {x} \subseteq \ra(\rho)$}
\STATE Alice chooses an unordered vertex $w$ and sets $\tau' := \tau \string^ (w)$
\ELSIF {$S_A ' (\rho) = a \neq x$}
\STATE $\tau' := \tau \string^ (a)$ \AND $\rho' := \rho \string^ (a)$
\ELSE [$x$ is an illegal option]
\STATE $V' = V - \ra (\rho) - x$ 
\STATE let $y$ satisfy $\min_{y \in V'} d_{G} (y)$
\IF {$V' - y = \emptyset$}
\STATE $\tau' := \tau \string^ (y)$ \AND $\rho' := \rho \string^ (x,y)$
\ELSE [there exists another unchosen vertex in $G$]
\STATE let $z = S_A' (\rho \string^ (x,y) )$
\STATE $\tau' := \tau \string^ (z)$ \AND $\rho' := \rho \string^ (x,y,z)$
\ENDIF
\ENDIF

\STATE {$\{$We again check our termination conditions$\}$}
\IF {$\ra (\tau ' ) = V(H)$}
\STATE end
\ENDIF
\end{algorithmic}

\caption{Alice's Monotonicity Algorithm\label{Monotonicity Algorithm}}
\end{algorithm}
We want Alice's strategy $S_{A}$ to interpret Bob's last move in
the real game $v$ (assuming this is not the first turn of the game)
as a move in the imaginary game so that $S'_{A}\left(\rho'\mathcircumflex\left(v\right)\right)$
will be her next move in both the imaginary and real game. Two problems
may arise. If $v$ has already been played in the imaginary game i.e.
$v\in\ra\left(\rho'\right)$, then Alice cannot interpret it as a
move in the imaginary game. We call this an \emph{illegal repeat.}
If $S'_{A}\left(\rho'\mathcircumflex\left(v\right)\right)=x$, Alice
cannot play $x$ in the real game because $x\notin V\left(H\right)$.
We call this an \emph{illegal option}. Note that from lines $19\text{-}28$
of the algorithm, an illegal option causes Alice to order $x$ in
the imaginary game. So (\ref{eq: Equality}) holds before an illegal
option, and (\ref{eq: Size}) holds after one. Also from lines $19\text{-}28$,
an illegal repeat can only occur after an illegal option has already
occurred.

Accounting for these illegal possibilities complicates Alice's strategy.
First, Alice constructs $\rho$ from $\rho'$. If this is the first
turn of the ordering game, then $\rho=\sigma$ according to lines
$6\text{-}7$ in Algorithm \ref{Monotonicity Algorithm}. Else if
$v$ is legal in the imaginary game, then Alice sets $\rho=\rho'\mathcircumflex\left(v\right)$
according to lines $8\text{-}9$. Else if $v$ is an illegal repeat,
then Alice sets $\rho=\rho'\mathcircumflex\left(y\right)$, where
$y\in V\left(H\right)-\ra\left(\rho'\right)$ is an unordered vertex
of smallest degree. One such $y$ exists because of lines $2\text{-}4$.
This process is seen in lines $10\text{-}12$.

Next, Alice redefines $\rho'$ from $\rho$ and constructs $\tau'$
from $\tau$, where $\tau'$ represents the position after Alice's
last play in the real game. Since Alice has constructed $\rho$ but
not yet redefined $\rho'$, we may invoke (\ref{eq: Invariant}).
If $V-x\subseteq\ra\left(\rho\right)$, then there is at most one
vertex in $V-x-\ra\left(\tau\right)$ by (\ref{eq: Size}). Since
the termination condition in lines $2\text{-}4$ was passed, there
is exactly one such vertex, call it $w$. Alice sets $\tau'=\tau\mathcircumflex\left(w\right)$
according to lines $15\text{-}16$. Else if $S'_{A}\left(\rho\right)=a\neq x$,
then Alice sets $\rho'=\rho\mathcircumflex\left(a\right)$ and $\tau'=\tau\mathcircumflex\left(a\right)$
according to lines $15\text{-}16$. We know such an $a$ can be played
in both the imaginary and real games by (\ref{eq: Size}). Else we
have an illegal option, so she will again determine the unchosen vertex
$y\in V\left(H\right)-\ra\left(\rho'\right)$ of smallest degree.
Recall, since the condition of lines $2\text{-}4$ was passed, there
is at least one unordered vertex in the real game. In particular,
$y$ is unordered in the real game because of (\ref{eq: Size}). So
if there are no vertices left to order in the imaginary game, then
Alice sets $\rho'=\rho\mathcircumflex\left(x,y\right)$ and $\tau'=\tau\mathcircumflex\left(y\right)$
according to lines $22\text{-}23$. Else letting $z=S'_{A}\left(\rho\mathcircumflex\left(x,y\right)\right)$,
she sets $\rho'=\rho\mathcircumflex\left(x,y,z\right)$ and $\tau'=\tau\mathcircumflex\left(z\right)$,
as seen in lines $24\text{-}26$. 

We must now check that our invariant holds after Bob's next play on
the real game, and Alice's next construction of $\rho$ in the imaginary
game. If Bob has no other moves to play, then we are done. Else, suppose
Bob sets $\tau=\tau'\mathcircumflex\left(b\right)$ for some $b\in V\left(H\right)-\ra\left(\tau'\right)$.
Let $\rho_{1}$ be the $\rho$ from before Bob's play of $b$, and
$\rho_{2}$ be the $\rho$ after Bob's play of $b$. Define $\tau_{1}$
and $\tau_{2}$ in the same way. Then we assume
\[
\ra\left(\rho_{1}\right)-x=\begin{cases}
\ra\left(\tau_{1}\right), & \text{if }x\notin\ra\left(\rho_{1}\right)\\
\ra\left(\tau_{1}\right)+w\text{ for some }w\notin\ra\left(\tau_{1}\right), & \text{else.}
\end{cases}
\]

First assume $x\notin\ra\left(\rho_{1}\right)$, so $\ra\left(\rho_{1}\right)-x=\ra\left(\tau_{1}\right)$.
If $V-x\subseteq\ra\left(\rho_{1}\right)$, then since $x\notin\ra\left(\rho_{1}\right)$,
$x$ is the only unordered vertex in the imaginary game, so we are
done. Else if $S'_{A}\left(\rho_{1}\right)=a\neq x$, we set $\rho'=\rho_{1}\mathcircumflex\left(a\right)$
and $\tau'=\tau_{1}\mathcircumflex\left(a\right)$. So $\tau_{2}=\tau_{1}\mathcircumflex\left(a,b\right)$.
Since $x\notin\ra\left(\rho'\right)$, $b$ cannot be an illegal repeat
so $\rho_{2}=\rho_{1}\mathcircumflex\left(a,b\right)$ and we get
$\ra\left(\rho_{2}\right)-x=\ra\left(\tau_{2}\right)$ with $x\notin\ra\left(\rho_{2}\right)$.
Else $S'_{A}\left(\rho_{1}\right)=x$. Letting $y$ and $V'$ be as
in lines $20\text{-}21$, if $V'-y=\emptyset$, then the game is finished
before the construction of $\rho_{2}$ and $\tau_{2}$. Else letting
$z$ be as in line $25$, we have $\rho'=\rho_{1}\mathcircumflex\left(x,y,z\right)$
and $\tau'=\tau_{1}\mathcircumflex\left(z\right)$. So $\tau_{2}=\tau_{1}\mathcircumflex\left(z,b\right)$.
If $b\neq y$, then $\rho_{2}=\rho_{1}\mathcircumflex\left(x,y,z,b\right)$
so $\ra\left(\rho_{2}\right)-x=\ra\left(\tau_{2}\right)+y$ with $x\in\ra\left(\rho_{2}\right)$.
Else $b=y\in\ra\left(\rho'\right)$ so we have an illegal repeat.
Letting $y'$ be as in line $12$, we have $\rho_{2}=\rho_{1}\mathcircumflex\left(x,y,z,y'\right)$.
So $\ra\left(\rho_{2}\right)-x=\ra\left(\tau_{2}\right)+y'$ with
$x\in\ra\left(\rho_{2}\right)$.

Finally assume $x\in\ra\left(\rho_{1}\right)$. If $V-x\subseteq\ra\left(\rho'\right)$,
then all vertices have been ordered in the imaginary game and we are
done. Else since $x$ has already been played, we must have $S'_{A}\left(\rho_{1}\right)=a\neq x$.
So we set $\rho'=\rho_{1}\mathcircumflex\left(a\right)$ and $\tau'=\tau_{1}\mathcircumflex\left(a\right)$,
and thus $\tau_{2}=\tau_{1}\mathcircumflex\left(a,b\right)$. If $b\notin\ra\left(\rho'\right)$,
then $\rho_{2}=\rho_{1}\mathcircumflex\left(a,b\right)$. However
since $x\in\ra\left(\rho_{1}\right)$, there exists $y\in\ra\left(\rho'\right)-\ra\left(\tau_{2}\right)$
by line $26$. Thus we have $\ra\left(\rho_{2}\right)-x=\ra\left(\tau_{2}\right)+y$
with $x\in\ra\left(\rho_{2}\right)$. Else $b\in\ra\left(\rho'\right)$,
so letting $y$ be as defined in line $12$, we have $\rho_{2}=\rho_{1}\mathcircumflex\left(a,y\right)$.
Therefore, $\ra\left(\rho_{2}\right)-x=\ra\left(\tau_{2}\right)+y$
with $x\in\ra\left(\rho_{2}\right)$.

We have shown that Algorithm \ref{Monotonicity Algorithm} maintains
(\ref{eq: Invariant}). Now we must show that if Alice plays according
to this algorithm, the final score in the real game will be at most
the final score in the imaginary game. Let $\tau$, $\tau'$, $\rho$,
and $\rho'$ be as described above. For any $u\in V\left(H\right)$,
let $s\left[u\right]$ be the score of $u$ in the $\sigma$-game
on $H$. Similarly, for any $w\in V\left(G\right)$, let $s'\left[w\right]$
be the score of $w$ in the $\sigma$-game on $G$. By assumption,
$s'\left[w\right]\leq s$ for all $w\in V\left(G\right)$. It suffices
to show that $s\left[u\right]\leq s$ for all $u\in V\left(H\right)$. 

Let $u\in V\left(H\right)$. First, suppose Alice just chose $u$
in $\tau'$. From lines $18,23,\text{ and }26$ of Algorithm \ref{Monotonicity Algorithm},
we know $u$ is the last vertex in $\rho'$. Also by (\ref{eq: Containment}),
all vertices ordered before $u$ in $\tau'$ have been ordered before
$u$ in $\rho'$. Thus, $s\left[u\right]\leq s'\left[u\right]\leq s$.
Next, suppose Bob just chose $u$ in $\tau$. If $u\notin\ra\left(\rho'\right)$,
then by line $9$ of the algorithm, $u$ is the last played vertex
in $\rho$. Then again by (\ref{eq: Containment}), we know $s\left[u\right]\leq s'\left[u\right]\leq s$. 

Else, $u$ was chosen in an earlier turn in $\rho$ than in $\tau$.
This can only happen if $u$ is a vertex of minimal degree amongst
unchosen vertices. Let $u^{*}$ denote the last vertex played in the
game. It suffices to show that the following chain of inequalities
hold:
\[
s\left[u\right]\leq d_{H}\left[u\right]\leq d_{G}\left[u\right]\leq d_{G}\left[u^{*}\right]=s'\left[u^{*}\right]\leq s.
\]

The first inequality holds because the backward neighbors of $u$
in $H$ with respect to $\tau$ contribute to the degree of $u$.
The second inequality holds because every neighbor of $u$ in $H$
is also a neighbor in $G$. The third inequality holds by minimal
choice of $u$ in line $20$. The equality holds because, since $u^{*}$
is the last vertex in either $\rho'$ or $\rho$, all of its neighbors
are backward neighbors. The last inequality holds by assumption.

Hence, in all cases, $s\left[u\right]\leq s$ for all $u\in V\left(H\right)$.
Therefore, by definition of the $\sigma$-game coloring number, $\sigma\text{-}\gcol\left(H\right)\leq s=\sigma\text{-}\gcol\left(G\right)$.
$\qedhere$
\end{proof}
Letting $\sigma$ be an empty preordering, we get the original monotonicity
result of Wu and Zhu. We will use the following Corollary.
\begin{cor}
Let $\sigma=\left(v_{1},\dots,v_{m}\right)$ be a preordering of $G=\left(V,E\right)$.
Let $H$ be a subgraph of $G$ with $\ra\left(\sigma\right)\subseteq V\left(H\right)$.
If $m$ is odd, then $\sigma\text{-}\gcol_{A}\left(H\right)\leq\sigma\text{-}\gcol_{A}\left(G\right)$.\label{Monotonicity Corollary}
\end{cor}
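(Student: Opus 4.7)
The plan is to rerun the proof of Lemma \ref{Monotonicity Lemma} essentially unchanged, observing that Algorithm \ref{Monotonicity Algorithm} and the invariant (\ref{eq: Invariant}) are stated in terms of Alice's next move rather than the parity of $m$. In Lemma \ref{Monotonicity Lemma}, the genuine structural assumption is that Alice is about to make a post-$\sigma$ move; the Alice $\sigma$-game for odd $m$ puts Alice in exactly that role, so the same mechanical argument applies.

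Concretely, I would set $s = \sigma\text{-}\gcol_{A}\left(G\right)$ and let $S'_{A}$ be Alice's strategy for the Alice $\sigma$-game on $G$ witnessing this bound. By induction on $\left|G\right|-\left|H\right|$, it suffices to handle $H = G - x$ for a single $x \notin V\left(H\right)$. I would then construct $S_{A}$ for the Alice $\sigma$-game on $H$ via the same real-versus-imaginary game paradigm, feeding each move through Algorithm \ref{Monotonicity Algorithm}. The initialization $\tau = \rho' = \sigma$ is legitimate because Alice is the first post-$\sigma$ mover in both games.

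The verification would split into two parts. First, the invariant (\ref{eq: Invariant}), together with its consequences (\ref{eq: Containment})--(\ref{eq: Size}), is preserved by the same case analysis on whether $x \in \ra\left(\rho\right)$ and on the value of $S'_{A}\left(\rho\right)$; none of those case distinctions references $m$ or its parity. Second, the final scoring chain $s\left[u\right] \leq d_{H}\left[u\right] \leq d_{G}\left[u\right] \leq d_{G}\left[u^{*}\right] = s'\left[u^{*}\right] \leq s$, and the simpler bound $s\left[u\right] \leq s'\left[u\right] \leq s$ in the remaining cases, both transfer without modification, yielding $\sigma\text{-}\gcol_{A}\left(H\right) \leq \sigma\text{-}\gcol_{A}\left(G\right)$.

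The main obstacle is conceptual rather than computational: one must confirm that no step in the Lemma's proof silently uses even $m$. A careful re-reading shows that Algorithm \ref{Monotonicity Algorithm}, the invariant-preservation analysis, and the terminal scoring comparison all depend only on who is about to move and on the current positions $\tau$ and $\rho$, so the corollary follows directly from the parity-free nature of the original argument.
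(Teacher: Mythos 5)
Your proposal is correct, but it takes a genuinely different route from the paper's. The paper never reopens the proof of Lemma \ref{Monotonicity Lemma}: it adjoins an isolated vertex $v$ to both $G$ and $H$ and appends it to the preordering, forming $\sigma'=\sigma\mathcircumflex\left(v\right)$ of even length, so that the lemma applies verbatim to give $\sigma'\text{-}\gcol\left(H'\right)\leq\sigma'\text{-}\gcol\left(G'\right)$; since $v$ is isolated it affects neither score, and the even-length preordering hands the first post-preordering move to Alice, so these quantities equal $\sigma\text{-}\gcol_{A}\left(H\right)$ and $\sigma\text{-}\gcol_{A}\left(G\right)$. You instead re-run the lemma's argument inside the Alice $\sigma$-game, claiming that Algorithm \ref{Monotonicity Algorithm}, the invariant (\ref{eq: Invariant}), and the terminal scoring comparison never consult the parity of $m$ --- which is true, since the algorithm is a response procedure invoked on Alice's turns, and the initialization $\tau=\rho'=\sigma$ together with the invariant maintenance goes through whether or not Bob has already moved. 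The paper's reduction is shorter and uses the lemma as a black box, at the cost of the isolated-vertex gadget; your argument carries the burden of actually auditing the lemma's proof (which your proposal asserts rather than executes in detail), but in exchange it makes explicit the sharper observation that the monotonicity argument is indifferent to which player opens, a fact the paper only obtains indirectly.
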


\begin{proof}
Let $G'=\left(V\cup\left\{ v\right\} ,E\right)$ and $H'=\left(V\left(H\right)\cup\left\{ v\right\} ,E\right)$,
where $v\notin V$. Also, let $\sigma'=\sigma\mathcircumflex\left(v\right)$.
Then, $\sigma'$ is a preordering of $G'$ with $\ra\left(\sigma'\right)\subseteq V\left(H'\right)$.
So, by Lemma \ref{Monotonicity Lemma}, 
\[
\sigma'\text{-}\gcol\left(H'\right)\leq\sigma'\text{-}\gcol\left(G'\right).
\]

Since $\left|\sigma'\right|$ is even by construction, Alice begins
the $\sigma'$-game on $H'$. However, because $v$ is an isolated
vertex, it has no effect on the $\sigma'$-game coloring number of
$H'$. That is, $\sigma'\text{-}\gcol\left(H'\right)=\sigma\text{-}\gcol_{A}\left(H\right)$.
Similarly, $\sigma'\text{-}\gcol\left(G'\right)=\sigma\text{-}\gcol_{A}\left(G\right)$.
Therefore, by substitution, we get
\[
\sigma\text{-}\gcol_{A}\left(H\right)\leq\sigma\text{-}\gcol_{A}\left(G\right).\qedhere
\]
\end{proof}
Now, we observe what happens if one of the players skips a single
turn in the ordering game.

\pagebreak{}
\begin{thm}
Let $G=\left(V,E\right)$ be a graph, and let $\sigma=\left(v_{1},\cdots,v_{m}\right)$
be a preordering of $G$.\label{Skipping any turn}
\end{thm}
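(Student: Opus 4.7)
The plan is to prove the inequality
\[
\sigma\text{-}\gcol_{A}(G) \leq \sigma\text{-}\gcol_{B}(G),
\]
which is the precise statement that neither player benefits by skipping a turn from the position $\sigma$: forcing Alice to give up her next move (so Bob plays instead) can only make the achievable score worse, and symmetrically for Bob. The approach is to combine the monotonicity results just established with a single gadget: adjoining one new isolated vertex to $G$.

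I would begin by defining $G' = (V \cup \{u\}, E)$, where $u \notin V$ is a new isolated vertex. The key observation is that $u$ is inert for scoring. For any complete permutation $\tau'$ of $G'$, since $u$ has no neighbors, $N_{G'}^{+}[\tau', u] = \{u\}$ and $u$ never appears in $N_{G'}^{+}[\tau', v]$ for any other $v$. Thus if $\tau$ is the complete permutation of $G$ obtained from $\tau'$ by deleting $u$, we have $\col(G', \tau') = \col(G, \tau)$ (as long as $V$ is nonempty). In particular, if Alice, when playing first in a $\sigma$-game on $G'$, simply orders $u$, the remainder of the game is position-for-position and score-for-score identical to the Bob $\sigma$-game on $G$, so $\sigma\text{-}\gcol(G') \leq \sigma\text{-}\gcol_{B}(G)$ (or $\sigma\text{-}\gcol_{A}(G') \leq \sigma\text{-}\gcol_{B}(G)$ in the odd-$m$ case).

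Next, I would split on the parity of $m$. If $m$ is even, Alice naturally plays next in the $\sigma$-game on both $G$ and $G'$; applying Lemma \ref{Monotonicity Lemma} to $G \subseteq G'$ with the shared preordering $\sigma$ yields $\sigma\text{-}\gcol(G) \leq \sigma\text{-}\gcol(G')$, and chaining with the previous step gives
\[
\sigma\text{-}\gcol_{A}(G) = \sigma\text{-}\gcol(G) \leq \sigma\text{-}\gcol(G') \leq \sigma\text{-}\gcol_{B}(G).
\]
If $m$ is odd, the same argument runs with Corollary \ref{Monotonicity Corollary} in place of Lemma \ref{Monotonicity Lemma} and working with $\sigma\text{-}\gcol_{A}$ throughout, yielding $\sigma\text{-}\gcol_{A}(G) \leq \sigma\text{-}\gcol_{A}(G') \leq \sigma\text{-}\gcol_{B}(G)$.

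The main obstacle is formalizing the isolated-vertex observation with care: I need to check that Alice's strategy of opening with $u$ really does transport an optimal Bob-first strategy on $G$ into a strategy of the same score on $G'$, and that the converse correspondence between the complete permutations produced in the two games preserves $\col$. Once that bookkeeping lemma is nailed down, the rest is a straightforward case split on the parity of $m$ and a direct invocation of the earlier monotonicity statements.
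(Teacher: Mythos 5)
Your proposal is correct and follows essentially the same route as the paper: adjoin an isolated vertex $u$ to form $G'$, apply Lemma \ref{Monotonicity Lemma} (resp.\ Corollary \ref{Monotonicity Corollary}) to get $\sigma\text{-}\gcol(G)\leq\sigma\text{-}\gcol(G')$, and then observe that Alice opening with $u$ turns the Alice-first game on $G'$ into the Bob-first $\sigma$-game on $G$. The only cosmetic difference is that you fold the paper's intermediate quantity $\left(\sigma\mathcircumflex\left(v\right)\right)\text{-}\gcol\left(G'\right)$ into a single inequality and phrase both parity cases as one statement $\sigma\text{-}\gcol_{A}(G)\leq\sigma\text{-}\gcol_{B}(G)$.
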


\begin{enumerate}
\item If $m$ is even, then $\sigma\text{-}\gcol\left(G\right)\leq\sigma\text{-}\gcol_{B}\left(G\right)$.
\item If $m$ is odd, then $\sigma\text{-}\gcol_{A}\left(G\right)\leq\sigma\text{-}\gcol\left(G\right)$.
\end{enumerate}
\begin{proof}
$\left(1\right)$ Let $G'=\left(V',E\right)$, where $V'=V\cup\left\{ v\right\} $
for some isolated $v\notin V$. Clearly, $G\subset G'$ and $\ra\left(\sigma\right)\subset V'$.
So by Lemma \ref{Monotonicity Lemma}, $\sigma\text{-}\gcol\left(G\right)\leq\sigma\text{-}\gcol\left(G'\right)$.
It suffices to prove the following string of inequalities:
\[
\sigma\text{-}\gcol\left(G\right)\leq\sigma\text{-}\gcol\left(G'\right)\leq\left(\sigma\text{\textasciicircum}\left(v\right)\right)\text{-}\gcol\left(G'\right)=\sigma\text{-}\gcol_{B}\left(G\right).
\]

The first inequality holds by monotonicity. The second inequality
holds by the minimality of the $\sigma$-game coloring number. The
$\sigma$-game coloring number on $G'$ is witnessed by an optimal
strategy that Alice has to minimize the score, regardless of how Bob
plays. This strategy may not include choosing the vertex $v$ first,
so if she were to choose that vertex first, the score could increase.
Therefore, since the $\sigma\mathcircumflex\left(v\right)$-game can
be thought of as forcing Alice to choose $v$ first in the $\sigma$-game,
the score she can obtain in the $\sigma\mathcircumflex\left(v\right)$-game
cannot be smaller than the score she could normally obtain in the
ordering game.

As for the last equality, if Alice plays the isolated vertex first
on $G'$, she has not chosen any vertices in $G$. In other words,
the $\sigma\mathcircumflex\left(v\right)$-game on $G'$ is equivalent
to the Bob $\sigma$-game on $G$. Therefore, we are done.

$\left(2\right)$ Let $G'=\left(V',E\right)$, where $V'=V\cup\left\{ v\right\} $
for some isolated $v\notin V$. Clearly, $G\subset G'$ and $\ra\left(\sigma\right)\in V'$.
So by Corollary \ref{Monotonicity Corollary}, $\sigma\text{-}\gcol_{A}\left(G\right)\leq\sigma\text{-}\gcol_{A}\left(G'\right)$.
Again, following a similar argument as above, we have
\[
\sigma\text{-}\gcol_{A}\left(G\right)\leq\sigma\text{-}\gcol_{A}\left(G'\right)\leq\sigma\mathcircumflex\left(v\right)\text{-}\gcol_{B}\left(G'\right)=\sigma\text{-}\gcol\left(G\right).\qedhere
\]
\end{proof}
This theorem shows what happens when either Alice or Bob skip some
number of turns on the ordering game on $G=\left(V,E\right)$. Suppose
Alice is allowed to skip $k$ many turns in the ordering game. We
induct on $k$ to show that Alice cannot achieve a better score by
doing this. The case for $k=0$ is trivial, so suppose $k\geq1.$
Suppose Bob has an optimal strategy $S'_{B}$ that guarantees Alice
cannot reduce the score on the ordering game by skipping $\left(k-1\right)$
turns. Now let $\sigma=(v_{1},...v_{m})$ be the preordering which
gives the position of the game immediately before Alice skips her
$k$th turn. Then the ordering game resumes as the Bob $\sigma$-game,
with Bob using his optimal strategy $S''_{B}$ on this game. Bob's
overall strategy is given by
\[
S_{B}\left(v\right)=\begin{cases}
S'_{B}\left(v\right) & \text{if }v\in\ra\left(\sigma\right)\\
S''_{B}\left(v\right) & \text{else.}
\end{cases}
\]

By the inductive hypothesis, $\gcol\left(G\right)\leq\sigma\text{-}\gcol\left(G\right)$.
Since Bob is playing optimally on the Bob $\sigma$-game, we have
$\gcol\left(G\right)\leq\sigma\text{-}\gcol\left(G\right)\leq\sigma\text{-}\gcol_{B}\left(G\right)$
by Theorem \ref{Skipping any turn}. Therefore Alice cannot reduce
the score on the ordering game by skipping any number of turns. Following
a similar argument, if Bob is allowed to skip $\ell$ many turns,
$\sigma\text{-}\gcol_{A}\left(G\right)\leq\gcol\left(G\right)$ so
Bob cannot increase the score on the ordering game by skipping any
number of turns.

\section{Bounds on Induced Subgraphs}

In this section we show that for any graph $G$ and any $x\in V(G),$
$\gcol(G)\leq\gcol(G-x)+2$. We further present a construction for
which removing $k$ vertices from $G$ lowers the game coloring number
of that graph by $2k$, demonstrating that this bound is tight with
respect to induced subgraphs of any size. We begin with a lemma necessary
to the main result.
\begin{lem}
\label{Turn Skip Lemma}Let $G$ be a graph such that $\gcol_{A}(G)=s$.
Then $s\leq\gcol_{B}(G)\leq s+1.$
\end{lem}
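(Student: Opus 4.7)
The lower bound $s \le \gcol_B(G)$ is immediate from Theorem~\ref{Skipping any turn}(1) applied to the empty preordering $\sigma = ()$: that inequality specializes to $\gcol(G) \le \gcol_B(G)$, and $\gcol(G) = \gcol_A(G) = s$ by definition.

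For the upper bound $\gcol_B(G) \le s+1$, the plan is to decompose $\gcol_B(G)$ according to Bob's opening move and bound each piece via monotonicity. Since after Bob opens with a vertex $b$ the game continues as the $(b)$-preordered game with Alice to move, we have
\[
\gcol_B(G) \;=\; \max_{b \in V}\bigl((b)\text{-}\gcol_A(G)\bigr),
\]
so it suffices to show $(b)\text{-}\gcol_A(G) \le s+1$ for each $b \in V$. Lemma~\ref{Monotonicity Lemma} applied with $\sigma = ()$ gives $\gcol_A(G-b) \le \gcol_A(G) = s$; let $T_b$ denote a strategy witnessing this on the Alice-begin game on $G-b$.

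I would then have Alice play the real $(b)$-preordered game by running $T_b$ on a shadow Alice-begin game on $G-b$: at each of her turns she strips the leading $b$ off the real position to obtain a legal position in $G-b$ and plays whatever $T_b$ dictates there. Since $b \notin V(G-b)$ and real Bob's moves after his opening (none of which can be $b$) are exactly the legal Bob moves in the shadow game, the shadow is a bona fide Alice-begin game on $G-b$ whose score is at most $\gcol_A(G-b) \le s$. Comparing orderings, for every $u \ne b$ the predecessors of $u$ in the real ordering are exactly its predecessors in the shadow ordering together with the leading $b$; hence the back-degree of $u$ exceeds its shadow back-degree by $1$ if $u \sim b$ and is unchanged otherwise, while the back-degree of $b$ itself in the real ordering is $1$. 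The real score is therefore at most $\gcol_A(G-b)+1 \le s+1$, which gives $(b)\text{-}\gcol_A(G) \le s+1$ and completes the bound.

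The only delicate point will be the bookkeeping to verify that the shadow and real games line up at every turn and that $T_b$'s outputs remain legal in the real game. Both checks reduce to noting that $b$ is fixed in place at the very start of the real game and is not a vertex of $G-b$ at all, so no conflict can arise; this also makes the argument insensitive to the parity of $|V(G)|$.
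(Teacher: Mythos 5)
Your proof is correct and follows essentially the same route as the paper's: the lower bound via Theorem~\ref{Skipping any turn}, and the upper bound by having Alice answer Bob's opening vertex $b$ with her optimal strategy for the ordering game on $G-b$ (available since $\gcol_A(G-b)\leq s$ by Lemma~\ref{Monotonicity Lemma}) and observing that prepending $b$ increases each closed out-neighborhood by at most one. Your explicit decomposition $\gcol_B(G)=\max_{b}\bigl((b)\text{-}\gcol_A(G)\bigr)$ and the shadow-game bookkeeping are simply a more careful writing of what the paper leaves implicit.
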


\begin{proof}
By Theorem \ref{Skipping any turn}, we know $s\leq\gcol_{B}\left(G\right)$.
So, it suffices to show $\gcol_{B}\left(G\right)\leq s+1$. 

Let $x\in V(G)$ be the vertex Bob marks as the first move in the
Bob-ordering game on $G$ and let $G'=G-x$. By Lemma \ref{Monotonicity Lemma},
we know $\gcol_{A}\left(G'\right)\leq\gcol_{A}\left(G\right)=s$.
By definition, Alice has a strategy $S_{A}$ on the ordering game
on $G'$ that results in a score of at most $s$ regardless of how
Bob plays. 

Since Bob orders $x$ first, Alice can order all remaining vertices
according to $S_{A}$. Let $\tau=\left(x,v_{1},\dots,v_{n-1}\right)\in\Pi\left(G\right)$
be the permutation formed in accordance with $S_{A}$ and set $\tau'=\left(v_{1},\dots,v_{n-1}\right)$.
We know by definition of $S_{A}$ that for all $v\in V(G'),$
\[
\left|N_{G'}^{+}\left[\tau',v\right]\right|\leq s
\]
Furthermore, the addition of the single vertex $x$ at the beginning
of $\tau$ can only increase the number of outneighbors of any vertex
in $\tau'$ by at most one. Therefore for all $v\in V(G),$ 
\[
\left|N_{G}^{+}\left[\tau,v\right]\right|\leq\left|N_{G'}^{+}\left[\tau',v\right]\right|+1\leq s+1
\]
Therefore $\gcol_{B}\left(G\right)\leq s+1$. $\qedhere$
\end{proof}
\begin{lem}
\label{GCOL2 Graph} Let $G$ be a graph. Then $0\leq\gcol(G)-\gcol(G-x)\leq2$
for any $x\in V\left(G\right)$. 
\end{lem}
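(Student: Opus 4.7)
The lower bound $\gcol(G-x) \leq \gcol(G)$ is immediate from Lemma \ref{Monotonicity Lemma} applied with $H = G-x$ and $\sigma = ()$, so the only content lies in the upper bound $\gcol(G) \leq \gcol(G-x) + 2$.

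My plan is to give Alice an explicit strategy on the ordering game on $G$: she plays $x$ as her very first move, and for every subsequent move she follows an optimal strategy for the \emph{Bob}-ordering game on $G-x$. This composite strategy is well-defined because after Alice's first move of $x$ the remaining game takes place entirely on the vertex set $V(G-x)$ with Bob to move, which is by definition the Bob-ordering game on $G-x$ with empty preordering.

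Set $s = \gcol(G-x) = \gcol_A(G-x)$. I will invoke Lemma \ref{Turn Skip Lemma} to obtain $\gcol_B(G-x) \leq s+1$, so that Alice's chosen strategy forces the final ordering $\tau' \in \Pi(G-x)$ of the residual game to satisfy $|N_{G-x}^+[\tau', v]| \leq s+1$ for every $v \in V(G-x)$. To translate this back into a score bound for the full ordering $\tau = (x) \mathcircumflex \tau'$ on $G$, I will use the fact that $x$ appears first in $\tau$: for every $v \in V(G-x)$, the set $N_G^+[\tau, v]$ exceeds $N_{G-x}^+[\tau', v]$ by at most the single element $x$ (included exactly when $xv \in E$), so $|N_G^+[\tau, v]| \leq s+2$; and for $x$ itself $|N_G^+[\tau, x]| = 1$. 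Taking the maximum over $v \in V(G)$ yields $\gcol(G) \leq s+2$, as required.

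There is no serious obstacle; the argument is a clean composition of two prior results. Lemma \ref{Turn Skip Lemma} absorbs one of the two extra units by quantifying the cost to Alice of Bob beginning the residual game, while prepending $x$ to the ordering absorbs the other by inflating each closed out-neighborhood by at most one. The only point deserving explicit verification is that Alice's forced opening move genuinely reduces the remainder to the Bob-ordering game on $G-x$ (rather than to some preordered game on $G$), which is clear because $x$ is never touched again and has no effect on the out-neighborhoods of the remaining vertices beyond the $+1$ already accounted for.
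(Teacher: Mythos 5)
Your proof is correct and takes essentially the same approach as the paper: Alice opens with $x$, the residual game becomes the Bob-ordering game on $G-x$, Lemma \ref{Turn Skip Lemma} supplies one of the two extra units and prepending $x$ to the ordering supplies the other. If anything, your accounting of the closed out-neighborhoods is stated more cleanly than the paper's own wording.
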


\begin{proof}
Let $x\in V\left(G\right)$ and suppose $\gcol(G-x)=s.$ It suffices
to show that Alice has a strategy on $G$ such that $\gcol(G)\leq s+2$.
Alice's strategy on $G$ should be exactly her strategy on $G-x$,
except she will immediately mark $x$. We now consider the Bob game
on $G-x$. By Lemma \ref{Turn Skip Lemma}, $s\leq\gcol_{B}(G-x)\leq s+1$,
so when the game on $G-x$ is finished every $v\in V(G-x)$ will have
at most $s$ backneighbors. Since Alice ordered $x$ first it has
no backneighbors. Therefore, joining $x$ to the beginning of the
ordering on $G-x$ yields a game coloring number of at most $s+2$,
if and only if $xv'\in E(G)$ for some $v'\in V(G-x)$ with $s$ backneighbors.
Thus $\gcol(G)\leq s+2$.
\end{proof}
The following theorem generalizes the result of Lemma \ref{GCOL2 Graph}
while providing a construction demonstrating that its bound is tight.
\begin{thm}
\label{Main Result} For every $n\geq3$ there exist graphs $G$,
$H\subset G$ with $|G|=n$ such that if $|G-H|=k$, then $\gcol(H)\leq\gcol(G)-2k.$
\end{thm}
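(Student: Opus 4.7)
Iterating Lemma \ref{GCOL2 Graph} gives the complementary inequality $\gcol(G)-\gcol(H)\leq 2k$ for every induced subgraph $H$ of $G$ with $|G-H|=k$; the theorem asserts that this bound is tight. I would prove it by constructing, for each $n\geq 3$, an explicit pair of graphs $G$ and $H\subset G$ with $|G|=n$ such that $\gcol(H)\leq\gcol(G)-2k$, which combined with the iterated Lemma \ref{GCOL2 Graph} bound would force equality $\gcol(G)-\gcol(H)=2k$.

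The construction would consist of a base graph $H$ of known game coloring number $c$, together with $k$ additional ``booster'' vertices whose presence in $G$ is designed to let Bob force the score up by exactly $2$ per booster. Verification would then split into two explicit strategy arguments: (i) an Alice strategy on $H$ witnessing $\gcol(H)\leq c$, and (ii) a Bob strategy on $G$ that forces the final score to be at least $c+2k$ regardless of how Alice plays. The trivial case $k=0$ (take $H=G$) handles any small $n$ for which no nontrivial construction is required.

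The principal obstacle lies in engineering the boosters so that the bound of Lemma \ref{GCOL2 Graph} is saturated at every step of the iteration. Inspecting the proof of that lemma, the $+2$ decomposes into two independent contributions: a $+1$ coming from the bound $\gcol_B(G-x)\leq \gcol_A(G-x)+1$ supplied by Lemma \ref{Turn Skip Lemma}, and a $+1$ coming from the extra backneighbor that $x$ contributes to one of its neighbors in the final ordering. Naive dense families such as complete graphs, complete bipartite graphs, cocktail-party graphs, and friendship graphs realize only the second contribution per vertex and thus exhibit a decrease of only $1$ per removal. The real technical challenge is therefore to arrange each booster so that both contributions materialize simultaneously — the booster must be adjacent to a vertex whose backneighbor count is already maximized by Bob's forced ordering in the subgame, and its presence must also strictly disadvantage Alice against a Bob-first opening — and to verify that Bob has a single coherent strategy exploiting all $k$ boosters in sequence to accumulate the full $+2k$.
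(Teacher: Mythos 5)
There is a genuine gap: your proposal correctly frames what must be done — produce an explicit pair $(G,H)$ saturating the iterated bound $\gcol(G)-\gcol(H)\leq 2k$ from Lemma \ref{GCOL2 Graph} — but it never produces the construction, and the construction is the entire content of the theorem. You explicitly defer the "principal obstacle" of engineering the boosters, so what remains is a restatement of the problem rather than a proof. Worse, your heuristic steers away from the answer: you dismiss "naive dense families" as giving only a drop of $1$ per removal, but the paper's witness is exactly such a family, the complete split graph $G=K_{n+3}\lor\overline{K_{n+2}}$ with $H=K_{3}\lor\overline{K_{n+2}}$ obtained by deleting $k$ vertices of the clique side. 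The key computation is that for $G=K_{m}\lor\overline{K_{m-1}}$ one has $\gcol(G)\geq 2m-1$, because Bob can force some vertex of $K_{m}$ (which is adjacent to all $2m-2$ other vertices) to be ordered last, while $\gcol(K_{m-1}\lor\overline{K_{m-1}})\leq 2m-3$ because Alice can prevent the analogous event; squeezing against Lemma \ref{GCOL2 Graph} then gives $\gcol(G)=\gcol(G-x)+2$ exactly, and iterating yields the full $2k$.

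Note also that the mechanism in the paper is not the one you anticipate. You propose to saturate the two separate $+1$ contributions visible in the proofs of Lemmas \ref{Turn Skip Lemma} and \ref{GCOL2 Graph}, and to exhibit a single coherent Bob strategy on $G$ accumulating $+2k$ over the whole game. The paper instead argues one deletion at a time by a direct degree-and-parity count (a lower bound for $G$ via a Bob strategy, an upper bound for $G-x$ via an Alice strategy), and never needs to trace where the $+2$ "comes from" inside Lemma \ref{GCOL2 Graph}. If you want to complete your plan, the fastest route is to adopt this split-graph family and verify the two strategy claims for it; the booster framework as stated gives you no candidate to verify.
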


\begin{proof}
We begin with the observation that $\gcol(K_{n}\lor\overline{K_{n-1}})=\gcol(K_{n-1}\lor\overline{K_{n-1}})+2$
for all $n\geq3.$ 

Fix $n\in\mathbb{N}$ and let $G=K_{n}\lor\overline{K_{n-1}}$. Then
$\gcol(G)\geq d_{G}(x)+1=2(n-1)+1=2n-1$ for any $x\in K_{n}$ since
Alice cannot order every vertex in $K_{n}$ before Bob orders every
vertex in $\overline{K_{n-1}}.$

Now fix $x\in K_{n}.$ Then $G-x=K_{n-1}\lor\overline{K_{n-1}}$ and
$\gcol(G-x)\leq d_{G}(x)=2n-3$ since Alice can order every vertex
in $K_{n}$ before Bob orders every vertex in $\overline{K_{n-1}}.$
By Lemma \ref{GCOL2 Graph}, we must have $\gcol(G)=\gcol(G-x)+2.$ 

So let $G=K_{n+3}\lor\overline{K_{n+2}}$. Then by the above,$\gcol(G)=2(n+2)+1=2n+5$.
Removing $n$ vertices from $K_{n+3}$ gives us $K_{3}\lor\overline{K_{n+2}}$
which has game coloring number 5. 
\end{proof}
\begin{prop}
\label{Remove Vertex}If $d_{G}(x)\leq\gcol(G-x)$, then $\gcol(G)\leq\gcol(G-x)+1$.
\end{prop}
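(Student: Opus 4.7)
The plan is to let $s=\gcol(G-x)$ and exhibit a strategy for Alice in the ordering game on $G$ that forces a score of at most $s+1$. Alice will maintain an imaginary $\gcol(G-x)$ game alongside the real game on $G$: whenever Bob plays a vertex $v\neq x$ in the real game, she appends $v$ to the imaginary game; whenever Bob plays $x$, she leaves the imaginary game unchanged, treating this as a ``skip'' by Bob in the imaginary game. On her own turn Alice consults the imaginary game: she plays whatever her optimal strategy $S_A$ for $\gcol(G-x)=s$ dictates (always some vertex of $V(G-x)$), mirroring that choice in the real game. She plays $x$ herself only when forced, i.e., when $x$ is the sole unplayed vertex on her turn.

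The delicate point is showing that Alice's imaginary game finishes with score at most $s$ even when Bob plays $x$. If Bob never plays $x$, the imaginary game is a standard $\gcol(G-x)$ game that $S_A$ wins by hypothesis. If Bob does play $x$, say at real move $2k$, then the imaginary position $\sigma$ has odd length and was reached by Alice following $S_A$ against a legal opponent, so the continuation $\sigma\text{-}\gcol(G-x)\leq s$. From here Alice, not Bob, must move next in the imaginary game, and Theorem \ref{Skipping any turn}(2) gives $\sigma\text{-}\gcol_A(G-x)\leq\sigma\text{-}\gcol(G-x)\leq s$. Thus Alice switches to an Alice-to-move continuation strategy and still produces an imaginary ordering $\tau'$ on $V(G-x)$ with $|N_{G-x}^{+}[\tau',v]|\leq s$ for every $v\in V(G-x)$.

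The scoring bound is then a short check. Let $\tau$ be the completed real ordering of $G$ and $\tau'$ its restriction to $V(G-x)$. Inserting $x$ into $\tau'$ adds at most one backneighbor to any $v\in V(G-x)$, so $|N_G^{+}[\tau,v]|\leq |N_{G-x}^{+}[\tau',v]|+1\leq s+1$. For the vertex $x$ itself, every backneighbor of $x$ in $\tau$ is a neighbor of $x$ in $G$, so $|N_G^{+}[\tau,x]|\leq d_G(x)+1\leq s+1$ by hypothesis. Combining the two estimates yields $\gcol(G)\leq s+1=\gcol(G-x)+1$.

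The main obstacle will be ensuring that the switch triggered by Bob playing $x$ produces a legitimate Alice-to-move strategy of value at most $s$ on the imaginary game; this is exactly the content of the turn-skipping result (Theorem \ref{Skipping any turn}) applied to the preordering that has been built up at that moment, which is why we needed its preordered generality. The rest of the argument is simulation and additive bookkeeping.
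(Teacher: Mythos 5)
Your proposal is correct and follows essentially the same route as the paper: Alice simulates her optimal strategy for $G-x$, treats Bob's play of $x$ as a skipped turn handled by Theorem \ref{Skipping any turn}(2) applied to the odd-length preordering built up at that moment, and then observes that reinserting $x$ raises any closed out-neighborhood by at most one while $x$ itself scores at most $d_G(x)+1\leq s+1$. Your write-up is in fact somewhat more explicit than the paper's about why the switch to the Alice $\sigma$-game is legitimate, but the underlying argument is the same.
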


\begin{proof}
Let $\gcol(G-x)=s$ and suppose $d_{G}(x)\leq s.$ Alice's strategy
will be to use the same strategy $S_{A}$ she used for $G-x$ on $G$,
never marking $x$ unless it is her last turn and she is forced to.
If Bob never orders $x,$then ordering it last yields $\gcol(G)\leq s+1$
since Alice's strategy on $G-x$ ensures that no $v\in V(G-x)$ has
more than $s-1$ backneighbors and $d_{G}(x)\leq s.$ Suppose Bob
marks $x$ at some point during the game. We can guarantee that Alice
can respond to Bob ordering $x$ with $S_{A}$ by treating it as a
skipped move, which by Theroem \ref{Skipping any turn} cannot itself
result in a higher game coloring number. Therefore it suffices to
check the consequences of actually ordering $x$. Let $\tau$ be the
permutation of $G$ created in accordance with $S_{A}$ and let $s(v)$
denote the number of backneighbors in $\tau$ for any $v$. Consider
any $v\in V(G-x)$ such that $s(v)=s-1.$ If $xv\notin E(G),$then
the ordering of $x$ does not matter. If $xv\in E(G)$, then whether
Bob marks $x$ before or after all $s-1$ backneighbors of $v$ have
been ordered, $s(v)=s.$ Thus, $\gcol(G)$ $\leq$ $\gcol(G-x)+1$. 
\end{proof}
This bound is also tight. Let $G=K_{3}\vee\overline{K_{2}}-e$ where
$e$ is some edge between a vertex in $K_{3}$ and a vertex in $\overline{K_{2}}.$
Then $\gcol(G)=4$ while $\gcol(K_{2}\vee\overline{K_{2}})=3.$ Unfortunately,
$d_{G}(x)\geq\gcol(G-x)$ does not imply that $\gcol(G)\geq\gcol(G-x)$.
An immediate counterexample is found by taking $C_{5}$ and adjoining
a vertex $x$ which is adjacent to any two adjacent vertices in $C_{5}.$

\pagebreak{}

\section{Further Considerations}

The concept of this paper came from an open question posed at the
end of \textbf{\cite{Asymmetric}}. This paper focused on the \emph{(a,b)-asymmetric
marking game}, which is a variant of the ordering game. In this game,
Alice and Bob still take turns putting vertices into a linear ordering,
but Alice orders $a$ vertices in a row before Bob orders $b$ vertices
in a row, for $a,b\geq1$. As this variant of the ordering game heavily
focuses on multiple turns being taken for each person, it is natural
to look into what happens when any of these turns are skipped. This
could be done by expanding the asymmetric marking game to a preordered
asymmetric marking game.

We now list some open problems. 
\begin{problem*}
For any graph $G$ or class of graphs $\mathcal{G},$ does there always
exist a turn that Alice can skip in the ordering game without increasing
the score?
\end{problem*}
\begin{problem*}
Does there exist a general graph construction as in Thereom \ref{Main Result}
such that $|G-H|=k$ implies $\gcol(H)\leq\gcol(G)-k$ for all such
$G$? 
\end{problem*}
\begin{problem*}
Does there exist a graph $G$ for which Alice increases $\gcol(G)$
every time she skips a turn?
\end{problem*}
\bibliographystyle{ieeetr}
\bibliography{Citations}

\end{document}